\let\oldmarginpar\marginpar
\renewcommand\marginpar[1]{\-\oldmarginpar[\raggedleft\footnotesize #1]%
  {\raggedright\footnotesize #1}}
\def\n{\nabla}
\def\F{{\mathcal F}}
\def\P{{\mathcal P}}
\def\I{{\mathcal I}}
\theoremstyle{plain} 
\newtheorem{theorem}{Theorem}[section]
\newtheorem{proposition}[theorem]{Proposition}
\newtheorem{corollary}[theorem]{Corollary}
\theoremstyle{definition}
\newtheorem{lemma}[theorem]{Lemma}
\newtheorem{remark}[theorem]{Remark}
\theoremstyle{remark}
\newcommand{\bp}{\begin{proof}\;}
  \newcommand{\ep}{\end{proof}}
\title{Characterization of projectively flat Finsler manifolds of
  constant curvature with finite dimensional holonomy group}
\author{Zolt\'an Muzsnay and P\'eter T. Nagy}
\date{}
\begin{document}

\maketitle

\begin{center}
  \emph{Dedicated to Professor Lajos Tamássy on his 90th birthday}
\end{center}

\begin{abstract}

  \noindent
  In this paper we prove that the holonomy group of a simply connected locally
  projectively flat Finsler manifold of constant curvature is a finite
  dimensional Lie group if and only if it is flat or it is Riemannian.
\end{abstract}

\footnotetext{2000 {\em Mathematics Subject Classification:} 53C29, 53B40,
  17B66}

\footnotetext{{\em Key words and phrases:} holonomy, Finsler geometry, Lie
  algebras of vector fields.}

\footnotetext{This research was supported by the Hungarian Scientific Research
  Fund (OTKA) Grant K 67617.}

\section{Introduction}

A Finsler manifold is a pair $(M, F)$, where $M$ is an $n$-manifold and
$F\!:TM \to \mathbb R$ is a non-negative function, smooth and positive away
from the zero section of $TM$, positively homogeneous of degree 1, and
strictly convex on each tangent space.  A Finsler manifold of dimension $2$ is
called \emph{Finsler surface}.
\\[1ex]
The concept of Finsler manifold is a direct generalization of the Riemannian
one.  The fundamental tensor $g=g_{ij}dx^i\otimes dx^j$ associated to $\F$ is
formally analogous to the metric tensor in Riemannian geometry.  It is defined
by
\begin{equation} 
  \label{eq:g}
  g_{ij} := \frac{1}{2}\frac{\partial^2 \F^2}{ \partial {y^i} \partial
    {y^j}},
\end{equation} 
in an induced standard coordinate system $(x,y)$ on $TM$. As in Riemannian
geometry, a canonical connection $\Gamma$ can be defined for a Finsler space
\cite{Gri}.  However, since the energy function $E=\frac{1}{2}\F^2$ is not
necessarily quadratic and only homogeneous, the connection is in general
non-linear. In the case, when the connection $\Gamma$ is linear, the Finsler
space is called \textit{Berwald space}. In particular, every Riemannian
manifold is a Berwald space.
\\[1ex]
Due to the existence of the canonical connection, the holonomy group of a
Riemannian or Finsler manifold can be defined in a very natural way: it is the
group generated by parallel translations along closed curves.  In the
Riemannian case, since the Levi-Civita connection is linear and preserve the
Riemannian metric, the holonomy group is a Lie subgroup of the orthogonal
group $O(n)$ (see \cite{BL}).  The Riemannian holonomy theory has been
extensively studied, and by now, its complete classification is known.
\\[1ex]
The holonomy properties of Finsler spaces is essentially different from the
Riemannian one.  It is proved in \cite{Mu_Na} that the holonomy group of a
Finsler manifold of nonzero constant curvature with dimension greater than $2$
is not a compact Lie group. In \cite{MuNa1} large families of projectively
flat Finsler manifolds of constant curvature are constructed such that their
holonomy groups are not finite dimensional Lie groups.  There are explicitly
given examples of Finsler 2-manifolds having maximal holonomy group. In these
examples the closure of the holonomy group is isomorphic to the orientation
preserving diffeomorphism group of the 1-dimensional sphere \cite{MuNa3}.
\\[1ex]
In this paper we are investigating the holonomy group of locally projectively
flat Finsler manifolds of constant curvature.  A Finsler function $\F$ on an
open subset $D \subset \mathbb R^n$ is called \emph{projectively flat}, if all
geodesic curves are straight lines in $D$. The Finsler manifold $(M,\F)$ is
said to be locally projectively flat, if for any point there exists a local
coordinate system in which $\F$ is projectively flat.  Our aim is to
characterize all locally projectively flat Finsler manifolds with finite
dimensional holonomy group.  To obtain such a characterization, we will
investigate the dimension of the infinitesimal holonomy algebra which was
introduced in the Finsler case in \cite{MuNa}.  In Proposition \ref{sec:thm1}
we prove that if $(M,\mathcal F)$ is a non-Riemannian locally projectively
flat Finsler manifolds of nonzero constant curvature, then its infinitesimal
holonomy algebra is infinite dimensional.  Using this result and the tangent
property of the infinitesimal holonomy algebra proved in \cite{MuNa} we obtain
the characterization given by Theorem \ref{sec:thm2}: \emph{The holonomy group
  of a locally projectively flat Finsler manifold of constant curvature is
  finite dimensional if and only if it is a Riemannian manifold or a flat
  Finsler manifold.}

\section{Preliminaries}

Throughout this article, $M$ is a $C^\infty$ smooth simply connected manifold,
${\mathfrak X}^{\infty}(M)$ is the vector space of smooth vector fields on $M$
and ${\mathsf {Diff}}^\infty(M)$ is the group of all $C^\infty$-diffeomorphism
of $M$.  The first and the second tangent bundles of $M$ are denoted by
$(TM,\pi ,M)$ and $(TTM,\tau ,TM)$, respectively.

\subsection{Finsler manifolds}

A \emph{Finsler manifold} is a pair $(M,\mathcal F)$, where the Finsler function 
$\F\colon TM\to \mathbb{R}$ is a continuous function, smooth on $\hat T M \!:=
\!TM\!  \setminus\! \{0\}$, its restriction ${\mathcal F}_x={\mathcal
  F}|_{_{T_xM}}$ is a positively homogeneous function of degree one and
the symmetric bilinear form
\begin{equation}\label{fins}
  g_{x,y} \colon (u,v)\ \mapsto \ g_{ij}(x, y)u^iv^j=\frac{1}{2}
  \frac{\partial^2 \mathcal F^2_x(y+su+tv)}{\partial s\,\partial t}\Big|_{t=s=0}
\end{equation}
is positive definit. The Finsler manifold $(M,\mathcal F)$ is Riemannian, if
$\mathcal F^2$ induces a quadratic form on any tangent space $T_xM$. Hence we
say that $(M,\mathcal F)$ is \emph{non-Riemannian Finsler manifold} if there
exists a point $x\in M$ such that $\mathcal F_x^2$ is not quadratic.
\\[1ex]
A vector field $X(t)=X^i(t)\frac{\partial}{\partial x^i}$ along a curve $c(t)$
is said to be parallel with respect to the associated \emph{homogeneous
  (nonlinear) connection} if it satisfies
\begin{equation}
  \label{eq:D}
  D_{\dot c} X (t):=\Big(\frac{d X^i(t)}{d t}+  G^i_j(c(t),X(t))\dot c^j(t)
  \Big)\frac{\partial}{\partial x^i} =0,
\end{equation}
where the \emph{geodesic coefficients} $G^i(x,y)$ are given by
\begin{equation}
  \label{eq:G_i}  G^i(x,y):= \frac{1}{4}g^{il}(x,y)\Big(2\frac{\partial
    g_{jl}}{\partial x^k}(x,y) -\frac{\partial g_{jk}}{\partial
    x^l}(x,y) \Big) y^jy^k.
\end{equation}
and  $G^i_j=\frac{\partial G^i}{\partial y^j}$. 
The \emph{horizontal Berwald covariant derivative} $\nabla_X\xi$ of
$\xi(x,y) = \xi^i(x,y)\frac {\partial}{\partial y^i}$ by the vector
field $X(x) = X^i(x)\frac {\partial}{\partial x^i}$ is expressed locally
by
\begin{equation}
  \label{covder}
  \nabla_X\xi = \left(\frac {\partial\xi^i(x,y)}{\partial x^j} 
    - G_j^k(x,y)\frac{\partial \xi^i(x,y)}{\partial y^k} + 
    G^i_{j k}(x,y)\xi^k(x,y)\right)X^j\frac {\partial}{\partial y^i}, 
\end{equation}
where $G^i_{j k}(x,y) := \frac{\partial G_j^i(x,y)}{\partial
  y^k}$.
\\[2ex]
A Finsler manifold $(M, \F)$ is said to be \emph{projectively flat}, if there 
exists a diffeomorphism of $M$ to an open subset $D \subset \mathbb R^n$ such 
that the images of geodesic curves are straight lines in $D$. A Finsler 
manifold $(M, \F)$ is said to be \emph{locally projectively flat}, if for 
every $x\in M$ there exists a local coordinate 
system $(U,x)$ such that $x=(x^1,\dots ,x^n)$ is mapping the
neighbourhood $U$ into the Euclidean space $\mathbb R^n$ such that the
straight lines of $\mathbb R^n$ correspond to the geodesics of $(M, \F)$ on
$U$. Then there exists a function $\P(x,y)$, such that the geodesic
coefficients are given by
\begin{equation}
  \label{eq:proj_flat_G_i_1}
  \hphantom{\quad \quad i=1,...,n} G^i(x,y) = \P(x,y)y^i, \quad\quad i=1,...,n
\end{equation}
The function $\P\!=\!\P(x,y)$ is called the \emph{projective factor} of
$(M,\F)$ on $U$. Since it is 1-homogeneous in the $y$-variable, we have also
the following relations:
\begin{equation}
  \label{eq:proj_flat_G_i_2}
  G^i_k = \frac{\partial\P}{\partial y^k}y^i + \P\delta^i_k,\quad G^i_{kl} 
  = \frac{\partial^2\P}{\partial y^k\partial y^l}y^i 
  + \frac{\partial \P}{\partial y^k}\delta^i_l + 
  \frac{\partial \P}{\partial y^l}\delta^i_k. 
\end{equation}
It follows from equation (\ref{eq:proj_flat_G_i_1}) that the associated
homogeneous connection (\ref{eq:D}) is linear if and only if the projective
factor $\P(x,y)$ is linear in $y$.  According to Lemma 8.2.1 in \cite{ChSh}
p.155, if $(M \!  \subset\! \mathbb R^n, \F)$ is a projectively flat Finsler
manifold with constant flag curvature $\lambda$, then we have
\begin{equation}
  \label{eq:P}
  \P = \frac{1}{2\F}\frac{\partial\F}{\partial x^i}y^i, \qquad 
  \quad  \P^2 -\frac{\partial\P}{\partial x^i}y^i 
  = \lambda \F^2.
\end{equation}
Hence if $\lambda\neq 0$ and $\P(x,y)$ is linear in $y$ at $x\in M$ then
$\F^2(x,y)$ is a quadratic form in $y$ at $x$.

\subsection{Holonomy group, curvature, infinitesimal holonomy algebra}

For a Finsler manifold $(M,\mathcal F)$ of dimension $n$ the \emph{indicatrix}
at $x \in M$ is
\begin{displaymath}
  \I_xM:= \{y \in T_xM \ | \ \ \F(y) = 1\}
\end{displaymath}
in $T_xM$ which is an $(n - 1)$-dimensional submanifold of $T_xM$.  We denote
by $(\I M,\pi,M)$ the \emph {indicatrix bundle} of $(M,\mathcal F)$.  We
remark that, although the homogeneous (nonlinear) parallel translation is in
general not metrical, that is it does not preserve the Finsler metric tensor (\ref{fins}), but
it preserves the value of the Finsler function.  That means that for any
curves $c:[0,1]\to M$, the induced parallel translation $\tau_{c}:T_{c(0)}M\to
T_{c(1)}M$ induces a map $\tau_{c}\colon \I_{c(0)}M\rightarrow \I_{c(1)}M$
between the indicatrices.
\\[1ex]
The \emph{holonomy group} $\mathsf{Hol}_x(M)$ of $(M,\F)$ at a point $x\in M$
is the subgroup of the group of diffeomorphisms
${\mathsf{Diff}^{\infty}}({\I}_xM)$ generated by (nonlinear) parallel
translations of ${\I}_xM$ along piece-wise differentiable closed curves
initiated at the point $x\in M$.
\\[2ex]
The \emph{Riemannian curvature tensor}
\begin{math}
  R_{}\!= \! R^i_{jk}(x,y) dx^j\otimes dx^k \otimes
  \frac{\partial}{\partial x^i}
\end{math}
has the expression
\begin{displaymath}
  R^i_{jk}(x,y) =  \frac{\partial G^i_j(x,y)}{\partial x^k} 
  - \frac{\partial G^i_k(x,y)}{\partial x^j} + 
  G_j^m(x,y)G^i_{k m}(x,y) - G_k^m(x,y)G^i_{j m}(x,y). 
\end{displaymath} 
The manifold has \emph{constant flag curvature} $\lambda\in{\mathbb R}$, if
for any $x\in M$ the local expression of the Riemannian curvature is
\begin{displaymath}
  R^i_{jk}(x,y) = \lambda\big(\delta_k^ig_{jm}(x,y)y^m -
  \delta_j^ig_{km}(x,y)y^m\big).
\end{displaymath}
For any vector fields $X, Y\in {\mathfrak X}^{\infty}(M)$ on $M$ the vector
field $\xi = R(X,Y)\in {\mathfrak X}^{\infty}({\I}M)$ is called a
\emph{curvature vector field} of $(M, \F)$ (see \cite{Mu_Na}). The Lie algebra
$\mathfrak{R}(M)$ of vector fields generated by the curvature vector fields of
$(M, \F)$ is called the \emph{curvature algebra} of $(M, \F)$.  The
restriction $\mathfrak{R}_x(M) \! := \!  \big\{\,\xi\big|_{\I_xM}\ ;\
\xi\in\mathfrak{R}(M)\, \big\} \subset {\mathfrak X}^{\infty}({\I}_xM)$ of the
curvature algebra to an indicatrix $\I_x M$ is called the \emph{curvature
  algebra at the point $x\in M$}.
\\
We remark that the indicatrix of a Finsler surface is 1-dimensional at any
point $x\in M$, hence the curvature vector fields at $x\in M$ are proportional
to any given non-vanishing curvature vector field. Therefore the curvature
algebra $\mathfrak{R}_x(M)$ is at most a 1-dimensional commutative Lie
algebra.
\\[2ex]
The \emph{infinitesimal holonomy algebra} of $(M,\mathcal F)$ is the smallest
Lie algebra $\mathfrak{hol}^{*}(M)$ of vector fields on the indicatrix bundle
$\I M$ containing the curvature algebra and invariant with respedct to the horizontal
Berwald covariant differentiation. The $\mathfrak{hol}^{*}(M)$ is
characterized by the following properties: \vspace{-3pt}
\begin{enumerate}
\item[(i)] any curvature vector field $\xi$ belongs to
  $\mathfrak{hol}^{*}(M)$, \vspace{-3pt}
\item[(ii)] if $\xi, \eta\in \mathfrak{hol}^{*}(M)$ then
  $[\xi,\eta]\in\mathfrak{hol}^{*}(M)$, \vspace{-3pt}
\item[(iii)] if $\xi\in\mathfrak{hol}^{*}(M)$ and $X\in {\mathfrak
    X}^{\infty}(M)$ then $\nabla_{\!\! X}\xi \in
  \mathfrak{hol}^{*}(M)$. \vspace{-2pt}
\end{enumerate} 
The restriction
\begin{math}
  \mathfrak{hol}^{*}_x(M) \! := \!  \big\{\,\xi\big|_{\I_xM}\ ;\
  \xi\in\mathfrak{hol}^{*}(M)\, \big\} \subset {\mathfrak
    X}^{\infty}({\I}_xM)
\end{math}
of the infinitesimal holonomy algebra to an indicatrix $\I_x M$ is called the
\emph{infinitesimal holonomy algebra at the point $x\in M$}.
\\
Clearly, we have $\mathfrak{R}(M)\subset\mathfrak{hol}^{*}(M)$ and
$\mathfrak{R}_x(M)\subset\mathfrak{hol}^{*}_x(M)$ for any $x\in M$ (see
\cite{MuNa}).

\section{Dimension of the holonomy group}

Let $(M,F)$ be a Finsler manifold and $x\in M$ an arbitrary point in $M$.
According to Proposition 3 of \cite{MuNa}, the infinitesimal holonomy algebra
$\mathfrak{hol}^{*}_x(M)$ is tangent to the holonomy group
$\mathsf{Hol}_x(M)$. Therefore the group generated by the exponential image of
the infinitesimal holonomy algebra at $x\in M$ with respect to the exponential
map
\begin{math}
  \exp_x\! :\!{\mathfrak X}^{\infty}({\I}_xM) \to
  {\mathsf{Diff}^{\infty}}({\I}_xM)
\end{math}
is a subgroup of the closed holonomy group $\overline{\mathsf{Hol}_x(M)}$ (see
Theorem 3.1 of \cite{MuNa3}). Consequently, we have the following estimation
on the dimensions:
\begin{equation}
  \label{eq:dim}
  \dim\mathfrak{hol}^{*}_x(M) \leq \dim \mathsf{Hol}_x(M).
\end{equation}
Using the result of S.~Lie claiming that the dimension of a finite-dimensional
Lie algebra of vector fields on a connected $1$-dimensional manifold is less
than $4$ (cf.~\cite{Lie}, Theorem 4.3.4) we can obtain the following
\begin{lemma}
  \label{simultan}
  If the infinitesimal holonomy algebra $\mathfrak{hol}^{*}_x(M)$ of a Finsler
  surface $(M,\F)$ contains $4$ simultaneously non-vanishing $\mathbb
  R$-linearly independent vector fields, then $\mathfrak{hol}^{*}_x(M)$ is
  infinite dimensional.
\end{lemma}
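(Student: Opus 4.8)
The plan is to argue by contradiction, exploiting the rigidity of Lie algebras of vector fields on a one-dimensional manifold. For a Finsler surface the indicatrix $\I_xM$ is a connected $1$-dimensional manifold (a smooth closed convex curve), so $\mathfrak{hol}^{*}_x(M)$ is realized as a Lie algebra of vector fields on $\I_xM$. Suppose, contrary to the claim, that $\mathfrak{hol}^{*}_x(M)$ is finite dimensional. I would then force a contradiction with the bound of S.~Lie, which guarantees that a finite-dimensional Lie algebra of vector fields acting (locally transitively) on a connected $1$-manifold has dimension at most $3$.

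Let $\xi_1,\xi_2,\xi_3,\xi_4\in\mathfrak{hol}^{*}_x(M)$ be the four $\mathbb R$-linearly independent vector fields and let $p\in\I_xM$ be a point at which all of them are simultaneously nonzero. Since $T_p\I_xM$ is $1$-dimensional, the four values $\xi_1(p),\dots,\xi_4(p)$ are nonzero vectors on a single line, so there are scalars $c_i$ with $\xi_i(p)=c_i\,\xi_4(p)$ and $c_4=1$. First I would form $\eta_i:=\xi_i-c_i\,\xi_4$ for $i=1,2,3$; these vanish at $p$ and, being obtained from $\xi_1,\dots,\xi_4$ by an invertible linear change, remain $\mathbb R$-linearly independent together with $\xi_4$. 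Hence the isotropy subalgebra $\mathfrak h_p:=\{\eta\in\mathfrak{hol}^{*}_x(M):\eta(p)=0\}$ has dimension at least $3$. On the other hand, since $\xi_4(p)\neq 0$, the evaluation map $\eta\mapsto\eta(p)$ from $\mathfrak{hol}^{*}_x(M)$ onto the line $T_p\I_xM$ is surjective with kernel $\mathfrak h_p$, so that $\dim\mathfrak h_p=\dim\mathfrak{hol}^{*}_x(M)-1$. Consequently $\dim\mathfrak{hol}^{*}_x(M)\geq 4$, which already exceeds Lie's bound of $3$ and yields the contradiction.

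The delicate point — and the reason simultaneous non-vanishing is indispensable — is the legitimacy of invoking Lie's theorem with the value $3$. On a $1$-manifold one can always manufacture high-dimensional Lie algebras from vector fields with pairwise disjoint supports, so the bound $3$ holds only for the transitive (local) model; this is exactly what a common nonvanishing point supplies. Near $p$ the field $\xi_4$ can be straightened to $\partial_t$, and a finite-dimensional Lie algebra transitive at $p$ must then reduce to a subalgebra of the projective model $\langle \partial_t,\; t\,\partial_t,\; t^2\partial_t\rangle$, whose isotropy at $p$ is at most $2$-dimensional. I expect the main work to lie in justifying this reduction cleanly — equivalently, in verifying that the four fields remain linearly independent after passage to the germ at $p$, i.e. that a nonzero element of $\mathfrak{hol}^{*}_x(M)$ cannot vanish on an open arc of $\I_xM$ — which rests on the rigidity of the curvature vector fields generating the infinitesimal holonomy algebra. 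Once transitivity at $p$ legitimizes the bound, the isotropy count above closes the argument and shows that $\mathfrak{hol}^{*}_x(M)$ must be infinite dimensional.
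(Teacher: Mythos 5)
Your proposal follows essentially the same route as the paper: both derive the contradiction from S.~Lie's bound that a finite-dimensional Lie algebra of vector fields on a connected $1$-dimensional manifold has dimension less than $4$, applied to the indicatrix $\I_xM$ of the surface. The paper's own proof is just the observation that a finite-dimensional $\mathfrak{hol}^{*}_x(M)$ containing four $\mathbb R$-linearly independent vector fields would give a Lie algebra (hence a locally effective local group action) of dimension at least $4$ on the $1$-dimensional indicatrix, contradicting the cited Theorem 4.3.4 of Ackermann--Hermann. Two remarks on your version. First, the detour through the isotropy subalgebra $\mathfrak h_p$ and the evaluation map is redundant: four linearly independent elements already force $\dim\mathfrak{hol}^{*}_x(M)\geq 4$, which is all the contradiction requires. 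Second, the localization issue you single out as ``the main work'' --- that the smooth-category bound of $3$ needs transitivity, and that the four fields must still have linearly independent germs at the common non-vanishing point $p$ (equivalently, that no nonzero element of the algebra vanishes on an open arc) --- is a legitimate subtlety, but you leave it as an expectation resting on an unexplained ``rigidity'' of curvature vector fields rather than proving it. The paper does not carry out this step either; it invokes Lie's theorem in its global form for connected $1$-manifolds and uses the simultaneous non-vanishing hypothesis only to make that invocation legitimate (in the application the four fields are smooth function multiples of a single nowhere-vanishing curvature vector field). So your argument is the paper's argument with one superfluous step and one honestly flagged but unclosed refinement; as a self-contained proof it is not complete at that flagged point, though it is complete in exactly the sense that the paper's own two-line proof is.
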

\begin{proof}
  If the infinitesimal holonomy algebra is finite-dimensional, then the 
  dimension of the corresponding Lie group acting locally effectively on the
  1-dimensional indicatrix would be at least 4, which is a contradiction.
\end{proof}
\noindent
Using Lemma \ref{simultan} we can prove the following 
\begin{proposition} 
  \label{sec:thm1}
  The infinitesimal holonomy algebra of any locally projectively flat 
  non-Riemannian Finsler surface $(M,\mathcal F)$ of constant curvature
  $\lambda\neq 0$ is infinite dimensional.
\end{proposition}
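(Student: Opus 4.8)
The plan is to produce four $\mathbb R$-linearly independent, simultaneously non-vanishing vector fields inside the infinitesimal holonomy algebra at one point and then to apply Lemma~\ref{simultan}. By the remark following \eqref{eq:P}, the non-Riemannian hypothesis together with $\lambda\neq0$ yields a point $x_0\in M$ at which the projective factor $\P(x_0,\cdot)$ is \emph{not} linear in $y$; fix such an $x_0$ and a projectively flat chart about it.

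First I would write the generating curvature vector field explicitly. From the constant-curvature form $R^i_{jk}=\lambda\big(\delta^i_k g_{jm}y^m-\delta^i_j g_{km}y^m\big)$ one gets, with $y_j:=g_{jm}y^m$, that $\xi_0:=R\big(\tfrac{\partial}{\partial x^1},\tfrac{\partial}{\partial x^2}\big)$ has vertical components $\xi_0^1=-\lambda y_2$, $\xi_0^2=\lambda y_1$. A one-line computation gives $\xi_0(\F^2)=2\xi_0^i y_i=0$, so $\xi_0$ is tangent to the indicatrix, while positive definiteness of $g$ and $\lambda\neq0$ force $\xi_0$ to vanish only at $y=0$. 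Hence $\xi_0$ is nowhere zero on $\I_{x_0}M$ and spans its tangent line at every point.

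Next I would generate $\xi_1,\xi_2,\xi_3\in\mathfrak{hol}^{*}_{x_0}(M)$ from $\xi_0$ by iterated horizontal Berwald covariant differentiation \eqref{covder} in the coordinate directions (using Lie brackets as an auxiliary source if needed), substituting the projectively flat expressions \eqref{eq:proj_flat_G_i_2} for $G^i_j$ and $G^i_{jk}$ and reducing the coordinate derivatives of $\P$ that arise by means of the constant-curvature relations \eqref{eq:P} and their $x$-derivatives, until everything is expressed through $\P(x_0,\cdot)$, $\F$ and their $y$-derivatives along the indicatrix. Since covariant differentiation preserves homogeneity degree and every element of $\mathfrak{hol}^{*}_{x_0}(M)$ is by construction tangent to the $1$-dimensional indicatrix, each $\xi_k$ is a degree-one vertical field and hence a multiple $\xi_k=\phi_k\,\xi_0$ of $\xi_0$, where $\phi_k$ is a $0$-homogeneous function on $\I_{x_0}M$ and $\phi_0\equiv1$. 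Because $\xi_0$ is nowhere zero, the quadruple $\xi_0,\xi_1,\xi_2,\xi_3$ is automatically simultaneously non-vanishing, and it is $\mathbb R$-linearly independent exactly when the functions $1,\phi_1,\phi_2,\phi_3$ are linearly independent on the indicatrix.

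The crux --- and the step I expect to be the main obstacle --- is this last linear independence. Parametrising the direction by $t=y^1/y^2$ and writing $\P(x_0,\cdot)=y^2\psi(t)$, the non-linearity of $\P$ means precisely $\psi''\not\equiv0$. A nontrivial relation $c_0+c_1\phi_1+c_2\phi_2+c_3\phi_3\equiv0$, once $\phi_1,\phi_2,\phi_3$ are expressed through $\psi$ and its derivatives, would amount to a linear ordinary differential equation satisfied by $\psi$; I would then show that, under the constraint coming from the second equation in \eqref{eq:P}, its only admissible solutions have $\psi''\equiv0$, i.e.\ a linear projective factor and therefore a Riemannian metric at $x_0$, contradicting the choice of $x_0$. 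The delicate points are the bookkeeping needed to reduce the higher coordinate derivatives in the previous step and, above all, verifying that the differential relation so obtained genuinely collapses to $\psi''\equiv0$ rather than admitting spurious solutions. With the four fields in hand, Lemma~\ref{simultan} shows that $\mathfrak{hol}^{*}_{x_0}(M)$, and hence the infinitesimal holonomy algebra of $(M,\F)$, is infinite dimensional.
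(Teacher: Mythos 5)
Your outline follows the same route as the paper's proof: the curvature vector field $\xi=R\big(\tfrac{\partial}{\partial x^1},\tfrac{\partial}{\partial x^2}\big)$, nowhere zero on the $1$-dimensional indicatrix; its first and second horizontal Berwald covariant derivatives, which are multiples $\phi_k\,\xi$ of $\xi$ by $0$-homogeneous functions; the reduction of $\mathbb R$-linear independence of the four vector fields to linear independence of the functions $1,\phi_1,\phi_2,\phi_3$; the one-variable substitution $t=y^1/y^2$; and finally Lemma~\ref{simultan}. Up to that point the proposal is correct and matches the paper, which computes explicitly $\nabla_k\xi=3\P_k\,\xi$ and $\nabla_j\nabla_k\xi=3\big(4\P_j\P_k-\lambda g_{jk}\big)\xi$ (using \eqref{rapcsak} to eliminate the mixed $x,y$-derivatives of $\P$).

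The gap sits exactly where you flag ``the main obstacle'': you only announce that a nontrivial relation $c_0+c_1\phi_1+c_2\phi_2+c_3\phi_3\equiv0$ forces $\psi''\equiv0$, and the mechanism you envisage --- a single linear ODE for $\psi$ --- does not go through as stated, because the second-derivative function contains the metric components $g_{jk}$, which are not expressible through $\P(x_0,\cdot)$ alone (the identity \eqref{eq:P} ties $\lambda\F^2$ to the $x$-derivatives of $\P$, which are extra data at the fixed point). The paper's resolution has two ingredients you would still need to supply. First, Lemma~\ref{1p1p2}: $1,\P_1,\P_2$ are already linearly independent (a short ODE argument), so dependence of the quadruple means that \emph{each} of the three functions $\P_j\P_k-\tfrac{\lambda}{4}g_{jk}$, $(j,k)\in\{(1,1),(1,2),(2,2)\}$, is an affine combination of $\P_1,\P_2$ with constant coefficients. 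Second, the elimination of $g$: differentiating these three identities in $y$ and using the total symmetry of $\partial g_{jk}/\partial y^l$ (i.e.\ $\partial_1 g_{21}=\partial_2 g_{11}$ and $\partial_1 g_{22}=\partial_2 g_{12}$, a consequence of \eqref{eq:g}) produces relations \eqref{eq:comp_dim_2} involving only $\P$ and its $y$-derivatives; after dividing by $f''\neq0$ these collapse to polynomial identities in $u/v$ whose only solution is $f$ affine --- the desired contradiction with non-linearity of $\P$. Without this elimination step the linear-independence claim remains unproved, so as written your text is an accurate plan for the paper's argument rather than a complete proof.
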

\begin{proof}
  Assume that the locally projectively flat Finsler surface $(M, \F)$ of
  non-zero constant curvature $\lambda$ is non-Riemannian at a fixed point
  $x\!\in\! M$. Let $(x^1,x^2)$ be a local coordinate system centered at $x$,
  corresponding to the canonical coordinates of the Euclidean plane which is
  projectively related to $(M, \F)$, and let $(y^1,y^2)$ be the induced
  coordinate system in the tangent planes $T_xM$.
  \\[1ex]
  Consider the curvature vector field
  \begin{displaymath}
    \xi(x,y)\!=\!R \left( \frac{\partial}{\partial x_1}, 
      \frac{\partial}{\partial x_2} \right)(x,y) 
    = \lambda\big(\delta_2^ig_{1m}(x,y)y^m -\delta_1^ig_{2m}(x,y)y^m\big) 
    \frac{\partial}{\partial x^i} 
  \end{displaymath}
  at the point $x\!\in\! M$.  Since $(M, \F)$ is of constant flag curvature,
  the horizontal Berwald covariant derivative $\nabla_WR$ of the tensor field
  $R$ vanishes and one has
  \begin{displaymath}
    \n_W\xi = R\left(\!\n_k \!\left(\!\frac{\partial}{\partial x^1}
        \wedge\frac{\partial}{\partial x^2}\right)\!\right)W^k.
  \end{displaymath}
  Since
  \begin{displaymath}
    \n_k\left(\frac{\partial}{\partial x^1} \wedge 
      \frac{\partial}{\partial x^2}\right) = 
    \left(G^1_{k1} + G^2_{k2}\right)\frac{\partial}{\partial x^1}
    \wedge\frac{\partial}{\partial x^2}
  \end{displaymath}
  we obtain $\n_W\xi = \left(G^1_{k1} + G^2_{k2}\right)W^k\xi$. According to
  (\ref{eq:proj_flat_G_i_2}) we have
  \begin{math}
    G^m_{km} = 3\frac{\partial P}{\partial y^k}
  \end{math}
  and hence $\nabla_k \xi = 3\frac{\partial P}{\partial y^k}\xi$, where
  $\nabla_k = \nabla_{\!\!\frac{\partial}{\partial x^k}}$.  Moreover we have
  \begin{displaymath}
    \nabla_j \!\left(\!\frac{\partial \P}{\partial y^{k}}\!\right)
    = \frac {\partial^2 \P}{\partial x^{j}\partial y^{k}} 
    - G_j^{m}\frac{\partial^2 \P}{\partial y^{m}\partial y^{k}}
    = \frac{\partial^2 \P}{\partial x^{j}\partial y^{k}} -
    \P\frac{\partial^2 \P}{\partial y^{k}\partial
      y^{j}},
  \end{displaymath}
  and hence 
  \begin{displaymath}
    \n_j\!\left(\n_k\xi \right) = 
    3\left(\frac {\partial^2 \P}{\partial x^{j}\partial y^{k}} - 
      \P\frac{\partial^2 \P}{\partial y^{k}\partial y^{j}} + 3\frac{\partial \P}
      {\partial y^{k}} \frac{\partial \P}{\partial y^{j}}\right) \xi.
  \end{displaymath}
  According to Lemma 8.2.1, equation (8.25) in \cite{ChSh}, p. 155, we have
  \begin{equation}
    \label{rapcsak} 
    \frac{\partial^2 \P}{\partial x^j\partial y^k} =     
    \frac{\partial \P}{\partial y^j}\frac{\partial \P}{\partial y^k}      
    +  \frac{\partial^2 \P}{\partial y^j\partial y^k} - \lambda\,g_{jk},
  \end{equation}
  hence
  \begin{math}
    \n_j\!\left(\n_k\xi\right) = 3\left(\!4 \frac{\partial \P}{\partial y^j}
      \frac{\partial \P}{\partial y^k} - \lambda\,g_{jk} \right) \xi.
  \end{math}
  It follows the
  \begin{lemma}
    \label{sec:vect_func}
    For any fixed $1\leq j,k \leq 2$
    \begin{equation}
      \label{eq:4_vectors}
      y\to \xi(x,y),\quad  y\to \n_1\xi(x,y),\quad y\to \n_2\xi(x,y),
      \quad  y\to \n_j\!\left(\n_k\xi\right)(x,y),
    \end{equation}
    considered as vector fields on $\I_xM$, are $\mathbb R$-linearly
    independent if and only if the
    \begin{equation}
      \label{indep}
      1,\qquad  \frac{\partial \P}{\partial y^1},
      \qquad \frac{\partial \P}{\partial y^2}, \qquad
      \frac{\partial^2 \P}{\partial y^j\partial y^k} - \frac{\lambda}{4}\;g_{jk}
    \end{equation}
    are linearly independent functions on $T_{x}M$.
  \end{lemma}
  Since we assumed that the Finsler function $\F$ is non-Riemannian at
  the point $x$, then $\F^2(x,y)$ is non-quadratic in $y$ and hence the
  function $\P(x,y)$ is non-linear in $y$ on $T_xM$
  (cf.~eq.~(\ref{eq:P})). Let us choose a direction $y_0\!=\!(y_0^1,y_0^2)\in
  T_xM$ with $y_0^1\neq 0$, $y_0^2\neq 0$ and having property that $\P$
  is non-linear $1$-homogeneous function in a conic neighbourhood $U$ of
  $y_0$ in $T_{x}M$. By restricting $U$ if it is necessary we can
  suppose that for any $y\in U$ we have $y^1\neq 0$, $y^2\neq 0$.
  \\
  To avoid confusion between coordinate indexes and exponents, we rename
  the fiber coordinates of vectors belonging to $U$ by
  $(u,v)=(y^1,y^2)$.  Using the values of $\P$ on $U$ we can define a
  $1$-variable function $f=f(t)$ on an interval $I\subset \mathbb R$ by
  \begin{equation}
    \label{eq:P_loc}
    f (t): =\frac{1}{v} \P(x_1,x_2,tv,v).
  \end{equation}
  Then we can express $\P$ and its derivatives with $f$:
  \begin{equation}
    \label{fxy}
    \begin{aligned}
      \P&\!=\!v \, f(u/v),& \ \frac{\partial \P}{\partial y^1}& \!=\!f'(u/v),&
      \ \frac{\partial \P}{\partial y^2}\!=\!f(&u/v)-\frac{u}{v} f'(u/v),
      \\
      \frac{\partial^2 \P}{\partial y^1\partial y^1} &\!=\!\frac{1}{v}
      f''(u/v), & \ \frac{\partial^2 \P}{\partial y^1\partial y^2}&
      \!=\!-\frac{u}{v^2} f''(u/v),& \ \frac{\partial^2 \P}{\partial
        y^2\partial y^2}&\!=\!\frac{u^2}{v^3} f''(u/v).
    \end{aligned}
  \end{equation}

  \begin{lemma}
    \label{1p1p2}
    The functions $1,\frac{\partial \P\,}{\partial y^1},\frac{\partial
      \P\,}{\partial y^2}$ are linearly independent.
  \end{lemma}
  \begin{proof} 
    A nontrivial relation $a+b\frac{\partial \P\,}{\partial
      y^1}+c\frac{\partial \P\,}{\partial y^2}=0$ yields the differential
    equation
    \begin{math}
      a + bf' + c (f-t f')=0.
    \end{math}
    It is clear that both $b$ and $c$ cannot be zero. If $c\neq 0$ we get the
    differential equation
    \[\frac{(a+cf)'}{a+cf}= \frac{1}{t-\frac{b}{c}}.\]
    The solutions is $f(t)=t-(a+b)/c$ and therefore the corresponding
    $\P(u,v)=u-v (a+b)/c$ is linear which is a contradiction.  If $c= 0$, then
    $b\neq0$ and $f=-\frac{a}{b}t+K$.  The corresponding
    $\P(u,v)=-\frac{a}{b}u+Kv$ is again linear which is a contradiction.
  \end{proof}

  \noindent
  Let us assume now, that the infinitesimal holonomy algebra is finite
  dimensional.  We will show that this assumption leads to contradiction which
  will prove then, that the infinitesimal holonomy algebra is actually
  infinite dimensional.
  \\[1ex]
  Since $\I_xM$ is 1-dimensional, according to the Lemma \ref{simultan}, the 4
  vector fields in (\ref{eq:4_vectors}) are linearly dependent for any $j,k\in
  \{1,2\}$. Using Lemma \ref{sec:vect_func} we get that the functions
  \begin{equation}
    \label{eq:4}
    1, \quad \P_1, \quad \P_2, \quad \P_j\P_k-\frac{\lambda}{4} g_{jk}
  \end{equation}
  ($\P_i=\frac{\partial \P}{\partial y^i}$, $\P_{jk}=\frac{\partial^2
    \P}{\partial y^j\partial y^k}$) are linearly dependent for any $j,k\in
  \{1,2\}$.  From Lemma \ref{1p1p2} we know, that the first three functions in
  (\ref{eq:4}) are linearly independent. Therefore by the assumption, the
  fourth function must be a linear combination of the first three, that is
  there exist constants $a_i,b_i,c_i\in \mathbb R$, $i=1,2,3$, such that
  \begin{equation} 
    \begin{aligned}
      \frac{\lambda}{4}\; g_{11}&= \P_1\P_1 + a_1 +b_1 \P_1 + c_1 \P_2, 
      \\
      \frac{\lambda}{4}\; g_{12}&= \P_1\P_2 + a_2 +b_2 \P_1 + c_2 \P_2,
      \\
      \frac{\lambda}{4}\; g_{22}&= \P_2\P_2 + a_3 +b_3 \P_1 + c_3 \P_2.  
    \end{aligned}
  \end{equation}
  Using (\ref{eq:g}) we get $\partial_1 g_{21}-\partial_2 g_{11}=0$ and
  $\partial_1 g_{22}-\partial_2 g_{12}=0$ which yield
  \begin{equation}
    \label{eq:comp_dim_2}
    \begin{aligned}
      \P_2\P_{11} - \P_1\P_{12} + b_2 \P_{11} + (c_2 -b_1) \P_{12} - c_1
      \P_{22}=0,
      \\
      \P_1\P_{22} -\P_2\P_{12} -b_3 \P_{11} +(b_2 - c_3) \P_{12} + c_2 \P_{22}
      =0.
    \end{aligned}
  \end{equation}
  Using the expressions (\ref{fxy}) we obtain from (\ref{eq:comp_dim_2}) the
  equations
  \begin{equation}
    \label{eq:comp_dim_2_f}
    \begin{aligned}
      (f-\frac{u}{v} f')\frac{1}{v} f'' + f'\frac{u}{v^2} f'' + b_2
      \frac{1}{v} f'' - (c_2 -b_1) \frac{u}{v^2} f'' - c_1 \frac{u^2}{v^3}
      f''&=0,
      \\
      f'\frac{u^2}{v^3} f'' +(f-\frac{u}{v} f')\frac{u}{v^2} f'' -b_3
      \frac{1}{v} f'' -(b_2 - c_3) \frac{u}{v^2} f'' + c_2 \frac{u^2}{v^3} f''
      &=0.
    \end{aligned}
  \end{equation}
  Since by the non-linearity of $\P$ on $U$ we have $f''\neq 0$, equations
  (\ref{eq:comp_dim_2_f}) can divide by $f''/v$ and we get
  \begin{equation}
    \begin{aligned}
      f +\; b_2 +\;(b_1-c_2) \frac{u}{v} - \;\frac{c_1u^2}{v^2} &=\;0
      \\
      \frac{u}{v}f -\; b_3 +\;( c_3-b_2) \frac{u}{v} +\; \frac{c_2u^2}{v^2}
      &=\;0.
    \end{aligned}
  \end{equation}
  for any $t=u/v$ in an interval $I\subset\mathbb R$.  The solution of this
  system of quadratic equations for the function $f$ is $f(t) = -c_2\,t-b_2$
  with $c_1 = b_3 = 0$, $b_1 = 2c_2$, $c_3 = 2b_2$.  But this is a
  contradiction, since we supposed that by the non-linearity of $P$ we have
  $f''\neq 0$ on this interval. Hence the functions $1$, $\P_1$, $\P_2$,
  $\P_j\P_k-\frac{\lambda}{4} g_{jk}$ can not be linearly dependent for any
  $j,k\in \{1,2\}$, from which follows the assertion.
\end{proof}

\begin{remark}
  From Proposition \ref{sec:thm1} we get that if $(M,\mathcal F)$ is
  non-Riemannian and $\lambda\neq 0$, then the holonomy group has an infinite
  dimensional tangent algebra. 
\end{remark}
\noindent
Indeed, according to Theorem 6.3 in \cite{MuNa} the infinitesimal holonomy
algebra $\mathfrak{hol}^{*}_x(M)$ is tangent to the holonomy group
$\mathsf{Hol}_x(M)$, from which follows the assertion.
\\[1ex]
Now, we can prove our main result:
\begin{theorem} 
  \label{sec:thm2}
  The holonomy group of a locally projectively flat simply connected Finsler
  manifold $(M,\mathcal F)$ of constant curvature $\lambda$ is finite
  dimensional if and only if $(M,\mathcal F)$ is Riemannian or $\lambda=0$.
\end{theorem}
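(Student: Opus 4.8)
The plan is to prove the two implications separately, with the forward (``if'') direction being elementary and the converse reducing to the Finsler surface case already settled in Proposition \ref{sec:thm1}. For the ``if'' direction: if $(M,\F)$ is Riemannian, then its canonical connection is the linear Levi--Civita connection and $\mathsf{Hol}_x(M)$ is a subgroup of the orthogonal group $O(n)$, hence a finite dimensional Lie group (see \cite{BL}); if $\lambda=0$, then by the constant curvature expression the Riemannian curvature tensor $R^i_{jk}$ vanishes identically, so parallel translation is path independent on the simply connected manifold $M$ and $\mathsf{Hol}_x(M)$ is trivial. In both cases the holonomy group is finite dimensional.

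For the converse I would argue by contraposition and show that if $(M,\F)$ is non-Riemannian and $\lambda\neq 0$, then $\mathsf{Hol}_x(M)$ is infinite dimensional. By the estimate (\ref{eq:dim}) it suffices to prove that $\mathfrak{hol}^{*}_x(M)$ is infinite dimensional at a point $x$ where $\F$ is non-Riemannian. The first step is to produce a $2$-plane $\Pi\subset T_xM$ on which $\F$ is still non-Riemannian: since the Cartan tensor $C_{x,y}$ does not vanish at $x$ and is totally symmetric with $C_{x,y}(y,\cdot,\cdot)=0$, a polarization argument yields a direction $y_0$ and a vector $w$ with $C_{x,y_0}(w,w,w)\neq 0$, and then $\Pi=\langle y_0,w\rangle$ has the property that the restriction of $\F$ to $\Pi$ is non-quadratic. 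Working in the projectively flat coordinates, the affine $2$-plane $\Sigma$ through $x$ with direction $\Pi$ is totally geodesic (straight lines stay in $\Sigma$), so $N:=\Sigma\cap D$ carries an induced locally projectively flat Finsler surface $\bar\F$; restricting the identities (\ref{eq:P}) shows that its projective factor is $\P|_{\Sigma}$ and that $\bar\F$ has the same constant curvature $\lambda$. By construction $\bar\F$ is non-Riemannian at $x$, so Proposition \ref{sec:thm1} applies and $\mathfrak{hol}^{*}_x(N)$ is infinite dimensional.

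The decisive step is to transfer this infinite dimensionality from $N$ to $M$. Choosing coordinates so that $\Pi=\langle\partial_1,\partial_2\rangle$ and $\Sigma=\{x^3=\dots=x^n=0\}$, I would exploit that by (\ref{eq:proj_flat_G_i_2}) the geodesic coefficients satisfy $G^i_k=\P_k y^i+\P\,\delta^i_k$, so along $\Pi$ (where $y^i=0$ for $i\geq 3$) all mixed coefficients $G^i_1,G^i_2$ with $i\geq 3$ vanish, and likewise $G^i_{kl}=0$ for $i\geq 3$, $k,l\in\{1,2\}$. A short induction then shows that the curvature vector field $\xi=R(\partial_1,\partial_2)$, together with every iterated Berwald derivative $\n_{k_1}\!\cdots\n_{k_r}\xi$ in the directions $k_j\in\{1,2\}$ and every iterated Lie bracket of these, has vanishing components in the directions $i\geq 3$ when restricted to $\I_xN=\I_xM\cap\Pi$, is therefore tangent to $\I_xN$, and restricts there exactly to the corresponding intrinsic object of the surface. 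Consequently the restriction of vector fields tangent to $\I_xN$ is a Lie algebra homomorphism $\rho$ carrying the subalgebra $\mathfrak{a}\subset\mathfrak{hol}^{*}_x(M)$ generated by $\xi$ under $\n_1,\n_2$ and brackets onto $\mathfrak{hol}^{*}_x(N)$; since $\rho$ is linear and its image is infinite dimensional, $\mathfrak{a}$, and hence $\mathfrak{hol}^{*}_x(M)$, is infinite dimensional, and (\ref{eq:dim}) gives $\dim\mathsf{Hol}_x(M)=\infty$.

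The main obstacle is precisely this last transfer. Note that Lemma \ref{simultan} is special to the $1$-dimensional indicatrix, so in dimension $n$ one cannot conclude infinite dimensionality merely from the four independent fields of Lemma \ref{sec:vect_func}; one really must carry the whole infinite family of $N$ across. The point that makes this possible --- and which I expect to require the most care to justify cleanly --- is that projective flatness forces $N$ to be totally geodesic and makes the off-plane geodesic coefficients vanish on $\Pi$, so that the Berwald covariant differentiation in the $\partial_1,\partial_2$ directions closes up within the surface and never feels the transverse directions. Once this is established, the equality of the restricted $M$-objects with the intrinsic $N$-objects, and with it the homomorphism property of $\rho$, follow formally.
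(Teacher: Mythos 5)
Your proof follows the same route as the paper's: the ``if'' direction is handled identically, and for the converse the paper likewise passes to a totally geodesic surface through $x$, applies Proposition \ref{sec:thm1} to it, and transfers the infinite dimensionality of the surface's infinitesimal holonomy algebra up to $M$ --- the paper simply cites Theorem 4.3 of \cite{MuNa1} for that transfer step, which is exactly the statement you sketch a direct proof of via the vanishing of the off-plane geodesic coefficients. Your version is in fact slightly more careful than the paper's in one respect: you explicitly produce, via polarization of the Cartan tensor, a $2$-plane on which the restricted metric remains non-Riemannian, a hypothesis of Proposition \ref{sec:thm1} that the paper's proof invokes without comment.
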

\begin{proof} 
  If $(M,\mathcal F)$ is Riemannian then its holonomy group is a Lie subgroup
  of the orthogonal group and therefore it is a finite dimensional compact Lie
  group.  If $(M,\mathcal F)$ has zero curvature, then the horizontal
  distribution associated to the canonical connection in the tangent bundle is
  integrable and hence the holonomy group is trivial.
  \\
  If $(M,\mathcal F)$ is non-Riemannian having non-zero curvature $\lambda$,
  then for each tangent $2$-plane $S\!\subset\! T_xM$ the manifold $M$ has a
  totally geodesic submanifold $\widetilde M\!\subset\! M$ such that
  $T_x\widetilde M=S$.  This $\widetilde M$ with the induced metric is a
  locally projectively flat Finsler surface of constant curvature
  $\lambda$. Therefore from Proposition \ref{sec:thm1} we get that
  $\mathfrak{hol}^{*}_x(\widetilde M)$ is infinite dimensional. Moreover,
  according to Theorem 4.3 in \cite{MuNa1}, if a Finsler manifold $(M, \F)$
  has a totally geodesic 2-dimensional submanifold $\widetilde M$ such that
  the infinitesimal holonomy algebra of $\widetilde M$ is infinite
  dimensional, then the infinitesimal holonomy algebra
  $\mathfrak{hol}^{*}_x(M)$ of the containing manifold is also infinite
  dimensional. Using (\ref{eq:dim}) we get that $\mathsf{Hol}_x(M)$ cannot be
  finite dimensional.  Hence the assertion is true.
\end{proof}
We note that there are examples of non-Riemannian type locally projectively
flat Finsler manifolds with $\lambda=0$ curvature, (cf. \cite{shen}).
\begin{remark} 
  In the discussion before the previous theorem, the key condition for the
  Finsler metric tensor was not the positive definiteness but its
  non-degenerate property. Therefore Theorem \ref{sec:thm2} can be generalized
  as follows.
\end{remark}
A pair $(M,\mathcal F)$ is called \emph{semi-Finsler manifold} if in the
definition of Finsler manifolds the positive definitness of the Finsler metric
tensor is replaced by the nondegenerate property. Then we have
\begin{corollary} 
  \label{sec:thm3}
  The holonomy group of a locally projectively flat simply connected
  semi-Finsler manifold $(M,\mathcal F)$ of constant curvature $\lambda$
  is finite dimensional if and only if $(M,\mathcal F)$ is
  semi-Riemannian or $\lambda=0$.
\end{corollary}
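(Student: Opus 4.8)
The plan is to re-run the entire argument of Theorem~\ref{sec:thm2}, together with its supporting Proposition~\ref{sec:thm1} and Lemmas~\ref{simultan}--\ref{1p1p2}, and to verify line by line that the positive definiteness of the fundamental tensor $g_{x,y}$ is never invoked, only its nondegeneracy. Concretely I would replace throughout ``Riemannian'' by ``semi-Riemannian'' (that is, $\F^2_x$ quadratic in $y$) and ``positive definite'' by ``nondegenerate'', and argue that every intermediate identity survives this weakening. Since the Remark preceding the statement already isolates nondegeneracy as the sole metric hypothesis actually used, the proof is expected to be a verification rather than a fresh computation.

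First I would revisit the algebraic core of Proposition~\ref{sec:thm1}. The constant-flag-curvature expression for $R^i_{jk}$, the Berwald covariant-derivative formulas, and in particular the Chern--Shen relations (\ref{eq:P}) and (\ref{rapcsak}) are consequences of projective flatness together with constant curvature; they enter the fundamental tensor purely through the nondegenerate linear system $g^{il}g_{lj}=\delta^i_j$, and none uses a sign condition. Likewise the bridge between geometry and analysis --- namely that $\F^2_x$ is quadratic in $y$ if and only if $g_{ij}(x,\cdot)$ is independent of $y$ if and only if, via the second equation of (\ref{eq:P}) with $\lambda\neq 0$, the projective factor $\P(x,\cdot)$ is linear --- is an equivalence of nondegenerate forms and holds verbatim. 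Hence for a non-semi-Riemannian $\F$ the function $\P$ is still nonlinear at the relevant point, so $f''\neq 0$ on the conic neighbourhood $U$, and the system (\ref{eq:comp_dim_2_f}) together with Lemmas~\ref{sec:vect_func} and~\ref{1p1p2} produces the same contradiction. Thus Proposition~\ref{sec:thm1} holds for non-semi-Riemannian locally projectively flat semi-Finsler surfaces of curvature $\lambda\neq 0$.

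Next I would confirm the two genuinely geometric inputs. The indicatrix $\I_xM=\{\F=1\}$ remains an $(n-1)$-dimensional submanifold on the open region where $\F$ is smooth and nonzero (there $d\F\neq 0$ by $1$-homogeneity), and the nonlinear parallel translations still preserve it because they preserve the value of $\F$; in the surface case each connected component is a $1$-dimensional manifold, so Lie's theorem and Lemma~\ref{simultan} apply unchanged after restricting the four vector fields of (\ref{eq:4_vectors}) to such a component inside the cone over $U$. With Proposition~\ref{sec:thm1} available, the dimension estimate (\ref{eq:dim}) and the reduction to totally geodesic surfaces proceed exactly as in Theorem~\ref{sec:thm2}: for $\lambda=0$ the horizontal distribution is integrable and the holonomy group is trivial, and in the semi-Riemannian case the holonomy group is a Lie subgroup of the pseudo-orthogonal group $O(p,q)$, hence finite dimensional.

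The main obstacle I anticipate lies in the totally geodesic reduction. For an indefinite $g$ one must select a \emph{nondegenerate} tangent $2$-plane $S\subset T_xM$, since otherwise the induced metric on the totally geodesic surface $\widetilde M$ tangent to $S$ would be degenerate and Proposition~\ref{sec:thm1} could not be applied to $\widetilde M$. Existence of $\widetilde M$ itself is immediate from projective flatness, as affine $2$-planes are totally geodesic; and because $g_{x,y}$ is nondegenerate the nondegenerate $2$-planes form an open dense set in the Grassmannian, from which one can choose $S$ on which $\F^2_x$ is still non-quadratic (the planes carrying a quadratic restriction form a proper closed set whenever $\F^2_x$ is itself non-quadratic). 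The only point demanding genuine care is to check that Theorem~4.3 of \cite{MuNa1}, which propagates infinite dimensionality from a totally geodesic surface to the ambient manifold, is insensitive to signature; this again follows because its proof uses $g$ only through nondegeneracy.
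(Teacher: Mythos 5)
Your proposal is correct and follows essentially the same route as the paper, which proves the corollary simply by observing (in the preceding Remark) that positive definiteness is never used, only nondegeneracy. Your additional care in selecting a \emph{nondegenerate} tangent $2$-plane on which $\F^2_x$ remains non-quadratic for the totally geodesic reduction is a detail the paper leaves implicit, and it strengthens rather than alters the argument.
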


\bigskip

\noindent
Zolt\'an Muzsnay
\\
Institute of Mathematics, University of Debrecen,
\\
H-4032 Debrecen, Egyetem t\'er 1, Hungary
\\
{\it E-mail}: {\tt {}muzsnay@science.unideb.hu}\vspace{4mm}
\\
P\'eter T. Nagy
\\
Institute of Applied Mathematics, \'Obuda University
\\
H-1034 Budapest, B\'ecsi \'ut 96/b, Hungary
\\
{\it E-mail}: {\tt {}nagy.peter@nik.uni-obuda.hu}

\end{document}